\theoremstyle{plane} 
\newtheorem{theorem}{\indent\sc Theorem}[section] 
\newtheorem{lemma}[theorem]{\indent\sc Lemma}
\newtheorem{proposition}[theorem]{\indent\sc Proposition}
\newtheorem{claim}[theorem]{\indent\sc Claim}
\newtheorem{problem}[theorem]{\indent\sc Problem}
\newtheorem*{main1}{\indent\sc Main Theorem 1}
\newtheorem*{main2}{\indent\sc Main Theorem 2}
\theoremstyle{definition}
\newtheorem{example}[theorem]{\indent\sc Example}
\begin{document}

\title[On exterior power of tangent bundles of Fano manifolds]{On the second and third exterior power of tangent bundles of Fano manifolds with birational contractions}
\author[K. Yasutake]{Kazunori Yasutake} 
\date{\today}
\subjclass[2010]{Primary 14J40; Secondary 14J10, 14J45, 14J60.}
\keywords{Fano manifold, exterior power of tangent bundle, elementary contraction.}
\address{Organization for the Strategic Coordination of Research and 
Intellectual Properties, Meiji University, Kanagawa 214-8571 Japan}
\email{tz13008@meiji.ac.jp}
\maketitle

\begin{abstract}
In this paper, we classify Fano manifolds with elementary contractions of birational type such that the second or third exterior power of tangent bundles are numerically effective.
\end{abstract}

\section*{Introduction}
In the paper \cite{peternell}, F. Campana and T. Peternell classified smooth projective threefolds $X$ such that the second exterior power of tangent bundle $\wedge^2\mathcal{T}_X$ are numerically effective $($nef, for short$)$. 
Among such threefolds, the blowing-up of three-dimensional projective space at a point is the only manifold having an elementary contraction of birational type.
In \cite{yasutake}, the author classified Fano fourfolds with the same condition and see that the blowing-up of four-dimensional projective space at a point is the only manifold having an elementary contraction of birational type.     
In this paper, we show that the same statement also holds in arbitrary dimension.  

\begin{main1}\label{second}
Let $X$ be an $n$-dimensional Fano manifold with at least one elementary contraction of birational type such that the second exterior power of tangent bundle $\wedge^2\mathcal{T}_X$ is nef where $n\geqslant 3$.
Then $X$ is isomorphic to the blowing-up of the $n$-dimensional projective space $\mathbb{P}^n$ at a point. 
\end{main1}  

Furthermore, we also study the case where the third exterior power of tangent bundle $\wedge^3\mathcal{T}_X$ is nef and obtain the following theorem.       
\begin{main2}\label{third}
Let $X$ be an $n$-dimensional Fano manifold with at least one elementary contraction of birational type such that the third exterior power of tangent bundle $\wedge^3\mathcal{T}_X$ is nef where $n\geqslant 4$.
Then X is isomorphic to one of the following,
\begin{enumerate}
\item blowing-up of $\mathbb{P}^n$ at a point, 
\item $\mathbb{P}_{\mathbb{P}^{n-1}}(\mathcal{O}_{\mathbb{P}^{n-1}}\oplus \mathcal{O}_{\mathbb{P}^{n-1}}(-2))$,
\item $\mathbb{P}_{Q^{n-1}}(\mathcal{O}_{Q^{n-1}}\oplus \mathcal{O}_{Q^{n-1}}(-1))$ where $Q^{n-1}$ is a $(n-1)$-dimensional smooth quadric hypersurface in $\mathbb{P}^n$, 
\item blowing-up of $\mathbb{P}^n$ along a line, 
\item product of $\mathbb{P}^1$ and the blowing-up of $\mathbb{P}^{n-1}$ at a point, 
\item blowing-up of $Q^4$ along a line, 
\item blowing-up of $Q^4$ along a conic not on a plain contained in $Q^4$, 
\end{enumerate}
\end{main2}

There is a problem about the nefness of $\wedge^q\mathcal{T}_X$ posed by F. Campana and T. Peternell.
\begin{problem}[\cite{peternell}, Problem 6.4]
Let X be a Fano manifold. Assume that $\wedge^q\mathcal{T}_X$ is nef on every extremal rational curve.
Is then $\wedge^q\mathcal{T}_X$ already nef ?
\end{problem}

In the proof of above theorems, we can see that the following theorem also holds.

\begin{theorem}
Let X be an $n$-dimensional Fano manifold with at least one elementary contraction of birational type where $n\geqslant 4$.
Assume that $\wedge^2\mathcal{T}_X$ $(resp. \wedge^3\mathcal{T}_X)$ is nef on every extremal rational curves in X.
Then $\wedge^2\mathcal{T}_X$ $(resp. \wedge^3\mathcal{T}_X)$ is nef.
\end{theorem}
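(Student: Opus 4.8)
The plan is to deduce this statement directly from the work already done in the proofs of Main Theorem 1 and Main Theorem 2, by observing that those proofs never use the full force of nefness of $\wedge^2\mathcal{T}_X$ (resp. $\wedge^3\mathcal{T}_X$), but only the restriction of this bundle to extremal rational curves. In other words, the classification is really proved under the weaker hypothesis appearing here, and the theorem then follows by combining that classification with the converse (realization) direction.

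First I would revisit the proof of Main Theorem 1 (resp. Main Theorem 2) and isolate every place where the hypothesis ``$\wedge^2\mathcal{T}_X$ is nef'' is actually invoked. In each such place one fixes an elementary contraction $\varphi\colon X\to Y$ of birational type, chooses a rational curve $C$ generating the associated extremal ray, and uses nefness only to bound the splitting type of $(\wedge^2\mathcal{T}_X)|_C=\wedge^2(\mathcal{T}_X|_C)$ after pulling back along the normalization $\mathbb{P}^1\to C$. These degree estimates on extremal rational curves, together with the structure theory of birational elementary contractions (the length of extremal rays and the description of the fibres of $\varphi$), are exactly what drive the whole argument. Consequently the same chain of implications goes through verbatim under the a priori weaker assumption that $\wedge^2\mathcal{T}_X$ (resp. $\wedge^3\mathcal{T}_X$) is nef on every extremal rational curve, and one still concludes that $X$ is isomorphic to one of the manifolds listed in Main Theorem 1 (resp. Main Theorem 2).

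The remaining step is the converse direction, which is where the genuine content lies: for each manifold $X$ occurring in those lists one must check that $\wedge^2\mathcal{T}_X$ (resp. $\wedge^3\mathcal{T}_X$) is actually nef, not merely nef on extremal rational curves. For the blow-up of $\mathbb{P}^n$ at a point, and for the projective bundles in items (2) and (3) of Main Theorem 2, this can be read off from the explicit description of $\mathcal{T}_X$ via the Euler and relative Euler sequences together with the known nefness of the base. The delicate cases are the two blow-ups of $Q^4$ in items (6) and (7), where one must control $\wedge^3\mathcal{T}_X$ along curves that need not lie in a fibre of any extremal contraction; this verification is part of establishing that the manifolds in the list genuinely have nef third exterior power.

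Putting the two directions together yields the equivalence: if $\wedge^2\mathcal{T}_X$ (resp. $\wedge^3\mathcal{T}_X$) is nef on every extremal rational curve, then $X$ lies in the classification list, and every manifold in that list has $\wedge^2\mathcal{T}_X$ (resp. $\wedge^3\mathcal{T}_X$) nef; hence $\wedge^2\mathcal{T}_X$ (resp. $\wedge^3\mathcal{T}_X$) is nef. The main obstacle is precisely the global nefness check in the converse direction --- in particular for the blow-ups of $Q^4$ --- since there nefness on extremal rational curves is strictly weaker than nefness and must be upgraded to positivity against all curves by hand.
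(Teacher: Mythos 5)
Your proposal is correct and is essentially the paper's own argument: the paper proves this theorem simply by remarking that the proofs of Main Theorems 1 and 2 invoke nefness of $\wedge^2\mathcal{T}_X$ (resp.\ $\wedge^3\mathcal{T}_X$) only through its restriction to rational curves lying in extremal rays (via splitting-type bounds as in Lemma \ref{rest} and Claim \ref{rest2}), so the classification persists under the weaker hypothesis, and then the Examples of Section 1 supply global nefness for every manifold on the resulting lists. Your identification of the converse verification (in particular for the blow-ups of $Q^4$) as the substantive remaining step matches exactly what the paper's Examples \ref{third5} and \ref{third6} carry out.
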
  

\section*{Acknowledgements}
The author would like to express his gratitude to Professor Eiichi Sato for many useful discussions and much warm encouragement. 
The author also would like to thank to Doctor Kento Fujita for many useful discussions. 
\section*{Notation}
Throughout this paper we work over the complex number field $\mathbb{C}$.
For a projective manifold X and a vector bundle $\mathcal{E}$ on $X$, let $\mathbb{P}_X(\mathcal{E})$ be the projectivization of $\mathcal{E}$ in the sense of Grothendieck and 
$\xi_{\mathcal{E}}$ be the tautological divisor. We usually denote the projection by $\pi: \mathbb{P}_X(\mathcal{E})\rightarrow X$.
We denote the blowing-up of smooth projective manifold $M$ along a smooth subvariety $Z$ by $Bl_Z(M)$.  

\section{Examples}
In this section, we confirm that manifolds appeared in Main Theorem 1 $($resp. Main Theorem 2$)$ satisfy the condition that  $\displaystyle \wedge ^2 \mathcal{T}_X$ $($resp. $\displaystyle \wedge ^3 \mathcal{T}_X)$ are nef. 
\begin{example}\label{bl_pt}
Let $X=Bl_{pt}(\mathbb{P}^n)$ be the blowing-up of $\mathbb{P}^n$ at a point where $n\geqslant4$. 
Then, by Lemma 2.1 in \cite{yasutake}, we know that $\wedge^2\mathcal{T}_X$ is nef.
\end{example}

\begin{example}\label{third1}
Set $X=\mathbb{P}_{\mathbb{P}^{n-1}}(\mathcal{O}_{\mathbb{P}^{n-1}}(2)\oplus \mathcal{O}_{\mathbb{P}^{n-1}})\cong\mathbb{P}_{\mathbb{P}^{n-1}}(\mathcal{O}_{\mathbb{P}^{n-1}}\oplus \mathcal{O}_{\mathbb{P}^{n-1}}(-2))$ where $n\geqslant4$. .
Put $\xi$ the tautological divisor on $\mathbb{P}_{\mathbb{P}^{n-1}}(\mathcal{O}_{\mathbb{P}^{n-1}}(2)\oplus \mathcal{O}_{\mathbb{P}^{n-1}})$ and $\pi : \mathbb{P}_{\mathbb{P}^{n-1}}(\mathcal{O}_{\mathbb{P}^{n-1}}(2)\oplus\mathcal{O}_{\mathbb{P}^{n-1}})\rightarrow \mathbb{P}^{n-1}$ the natural projection.
We have the following exact sequence :
\[0\rightarrow \mathcal{T}_{\pi}\rightarrow \mathcal{T}_X\rightarrow \pi^*\mathcal{T}_{\mathbb{P}^{n-1}}\rightarrow 0. \]
From this exact sequence, we obtain the following exact secuence : 
\[0\rightarrow \mathcal{T}_{\pi}\otimes\pi^*(\wedge^2\mathcal{T}_{\mathbb{P}^{n-1}})\rightarrow \wedge^3\mathcal{T}_X\rightarrow \pi^*(\wedge^3\mathcal{T}_{\mathbb{P}^{n-1}})\rightarrow 0. \]
The subbundle $\mathcal{T}_{\pi}\otimes\pi^*(\wedge^2\mathcal{T}_{\mathbb{P}^{n-1}})\cong\mathcal{O}_X(2\xi)\otimes\pi^*(\wedge^2\mathcal{T}_{\mathbb{P}^{2}}(-2))$ is nef.
Therefore, we have $\wedge^3\mathcal{T}_X$ is nef since $\pi^*(\wedge^3\mathcal{T}_{\mathbb{P}^{n-1}})$ is nef. 
\end{example}

\begin{example}\label{third2}
Set $X=\mathbb{P}_{Q^{n-1}}(\mathcal{O}_{Q^{n-1}}(1)\oplus \mathcal{O}_{Q^{n-1}})\cong\mathbb{P}_{Q^{n-1}}(\mathcal{O}_{Q^{n-1}}\oplus \mathcal{O}_{Q^{n-1}}(-1))$ where $n\geqslant4$. .
Put $\xi$ the tautological divisor on $\mathbb{P}_{Q^{n-1}}(\mathcal{O}_{Q^{n-1}}(1)\oplus \mathcal{O}_{Q^{n-1}})$ and $\pi : \mathbb{P}_{Q^{n-1}}(\mathcal{O}_{Q^{n-1}}(1)\oplus\mathcal{O}_{Q^{n-1}})\rightarrow Q^{n-1}$ the natural projection.
We have the following exact sequence :
\[0\rightarrow \mathcal{T}_{\pi}\rightarrow \mathcal{T}_X\rightarrow \pi^*\mathcal{T}_{Q^{n-1}}\rightarrow 0. \]
From this exact sequence, we obtain the following exact secuence : 
\[0\rightarrow \mathcal{T}_{\pi}\otimes\pi^*(\wedge^2\mathcal{T}_{Q^{n-1}})\rightarrow \wedge^3\mathcal{T}_X\rightarrow \pi^*(\wedge^3\mathcal{T}_{Q^{n-1}})\rightarrow 0. \]
We can show that the subbundle $\mathcal{T}_{\pi}\otimes\pi^*(\wedge^2\mathcal{T}_{Q^{n-1}})\cong\mathcal{O}_X(2\xi)\otimes\pi^*(\wedge^2\mathcal{T}_{Q^{n-1}}(-1))$ is nef by the surjection from nef bundle $\wedge^3\mathcal{T}_{\mathbb{P}^{n}}(-3)|_{Q^{n-1}}\rightarrow\wedge^2\mathcal{T}_{Q^{n-1}}(-1)\rightarrow 0$.
Hence, we have $\wedge^3\mathcal{T}_X$ is nef since $\pi^*(\wedge^3\mathcal{T}_{Q^{n-1}})$ is nef. 
\end{example}

\begin{example}\label{third3}
Let $X=Bl_{l}(\mathbb{P}^n)$ be the blowing-up of $\mathbb{P}^n$ along a line $l$ where $n\geqslant4$. 
Then, It is known that $X$ has a $\mathbb{P}^2$-bundle structure $X\cong\mathbb{P}_{\mathbb{P}^{n-2}}(\mathcal{O}^2_{\mathbb{P}^{n-2}}\oplus\mathcal{O}_{\mathbb{P}^{n-2}}(1))$. 
Put $\xi$ the tautological divisor 
and 
$\pi : \mathbb{P}_{\mathbb{P}^{n-2}}(\mathcal{O}^2_{\mathbb{P}^{n-2}}\oplus\mathcal{O}_{\mathbb{P}^{n-2}}(1))\rightarrow \mathbb{P}^{n-2}$ the natural projection.
We have the following two exact sequences :
\[0\rightarrow \mathcal{T}_{\pi}\rightarrow \mathcal{T}_X\rightarrow \pi^*\mathcal{T}_{\mathbb{P}^{n-2}}\rightarrow 0 \]
where $\mathcal{T}_{\pi}$ is the relative tangent bundle and   
\[0\rightarrow \mathcal{O}_{X}\rightarrow \mathcal{O}_X(\xi)\otimes\pi^*(\mathcal{O}^2_{\mathbb{P}^{n-2}}\oplus\mathcal{O}_{\mathbb{P}^{n-2}}(-1))\rightarrow \mathcal{T}_{\pi}\rightarrow 0. \]

If $n=4$, we have the following exact sequence from the first exact sequence : 
\[0\rightarrow \wedge^2\mathcal{T}_{\pi}\otimes\pi^*\mathcal{T}_{\mathbb{P}^{2}}\rightarrow \wedge^3\mathcal{T}_X\rightarrow \mathcal{T}_{\pi}\otimes\pi^*(\wedge^2\mathcal{T}_{\mathbb{P}^{2}})\rightarrow 0. \]
The subbundle $\wedge^2\mathcal{T}_{\pi}\otimes\pi^*\mathcal{T}_{\mathbb{P}^{2}}\cong\mathcal{O}_X(3\xi)\otimes\pi^*\mathcal{T}_{\mathbb{P}^{2}}(-1)$ is nef.
The quotient bundle is also nef by the surjection from the ample vector bundle 
$\mathcal{O}_X(\xi)\otimes\pi^*(\mathcal{O}^2_{\mathbb{P}^{n-2}}(3)\oplus\mathcal{O}_{\mathbb{P}^{n-2}}(2))\rightarrow \mathcal{T}_{\pi}\otimes\pi^*(\wedge^2\mathcal{T}_{\mathbb{P}^{2}})\rightarrow 0$. 
Therefore, $\wedge^3\mathcal{T}_X$ is nef.  

If $n\geqslant 5$, we have the following two exact sequences : 
\[0\rightarrow\mathcal{F}\rightarrow \wedge^3\mathcal{T}_X\rightarrow \pi^*(\wedge^3\mathcal{T}_{\mathbb{P}^{n-2}})\rightarrow 0 \]
and 
\[0\rightarrow \wedge^2\mathcal{T}_{\pi}\otimes\pi^*\mathcal{T}_{\mathbb{P}^{n-2}}\rightarrow \mathcal{F}\rightarrow \mathcal{T}_{\pi}\otimes\pi^*(\wedge^2\mathcal{T}_{\mathbb{P}^{n-2}})\rightarrow 0. \]
We can easily show that bundles $\pi^*(\wedge^3\mathcal{T}_{\mathbb{P}^{n-2}})$ and $\wedge^2\mathcal{T}_{\pi}\otimes\pi^*\mathcal{T}_{\mathbb{P}^{n-2}}$ are nef. 
We also see that $\mathcal{T}_{\pi}\otimes\pi^*(\wedge^2\mathcal{T}_{\mathbb{P}^{n-2}})$ is nef by the surjection from the ample vector bundle 
$\mathcal{O}_X(\xi)\otimes\pi^*(\wedge^2\mathcal{T}_{\mathbb{P}^{n-2}}^{\oplus 2}\oplus\wedge^2\mathcal{T}_{\mathbb{P}^{n-2}}(-1))\rightarrow \mathcal{T}_{\pi}\otimes\pi^*(\wedge^2\mathcal{T}_{\mathbb{P}^{n-2}})\rightarrow 0$. 
Therefore, $\wedge^3\mathcal{T}_X$ is nef.  
\end{example}

\begin{example}\label{third4}
Let $X=\mathbb{P}^1\times Y$ be the product of $\mathbb{P}^1$ and the blowing-up of $\mathbb{P}^{n-1}$ at a point $Y=Bl_{pt}(\mathbb{P}^{n-1})$ where $n\geqslant 4$.
Set $p : X\rightarrow \mathbb{P}^1$ the first projection and $q : X\rightarrow Bl_{pt}(\mathbb{P}^{n-1})$ the second projection.
Then, we have $\mathcal{T}_X\cong p^*\mathcal{T}_{\mathbb{P}^1}\oplus q^*\mathcal{T}_Y.$
Hence, $\wedge^3\mathcal{T}_X\cong p^*\mathcal{T}_{\mathbb{P}^1}\otimes q^*(\wedge^2\mathcal{T}_Y)\oplus q^*(\wedge^3\mathcal{T}_Y)$ is nef because we can easily check that $\wedge^3\mathcal{T}_Y$ is nef. 
\end{example}

\begin{example}\label{third5}
Let $X=Bl_{l}(Q^n)$ be the blowing-up of $Q^n$ along a line $l$ where $n\geqslant4$.
At first, we show that $\wedge^{n-1}\mathcal{T}_X$ is nef.
Let $Y=Bl_{l}(\mathbb{P}^{n+1})$ be the blowing-up of $\mathbb{P}^{n+1}$ containing $Q^n$ along a line $l$.
Then, $X$ is a closed subvariety of $Y$ of codimension one.
$Y$ has a $\mathbb{P}^2$-bundle structure $Y\cong\mathbb{P}_{\mathbb{P}^{n-1}}(\mathcal{O}^2_{\mathbb{P}^{n-1}}\oplus\mathcal{O}_{\mathbb{P}^{n-1}}(1))$. 
Put $\xi$ the tautological divisor and $H$ a divisor associated with the line bundle $\pi^*\mathcal{O}_{\mathbb{P}^{n-1}}(1)$ where 
$\pi : \mathbb{P}_{\mathbb{P}^{n-1}}(\mathcal{O}^2_{\mathbb{P}^{n-1}}\oplus\mathcal{O}_{\mathbb{P}^{n-1}}(1))\rightarrow \mathbb{P}^{n-1}$ is the natural projection.
We note that fibers of $\pi$ correspond to planes in $\mathbb{P}^{n+1}$ containing $l$.
From this fact, we can see that $\pi^{-1}(z)\cap X\cong\mathbb{P}^1$ for a general $z\in\mathbb{P}^{n-1}$. 

We show that $X\sim \xi+H$ in $Y$.
We set that $X\sim a\xi+bH$ for some integers $a$ and $b$.
Let $C$ be the strict transform of a line in $P^{n+1}$ not contained in $Q^n$ such that it does not intersect with $l$.
Then, we have that $X.C=2$, $\xi.C=1$ and $H.C=1$.
Let $C'$ be the strict transform of a line in $P^{n+1}$ not contained in $Q^n$ such that it meets $l$ at one point.
Then, we have that $X.C'=2-1=1$, $\xi.C'=1$ and $H.C'=1-1=0$.
Hence, we obtain that $a=b=1$. 
From this, we know that 
\[\mathcal{N}_{X/Y}= \xi+H|_X.\] 

By the exact sequence $0\rightarrow \mathcal{T}_X\rightarrow \mathcal{T}_Y|_X\rightarrow \mathcal{N}_{X/Y}\rightarrow 0$, we get the surjection of vector bundles on $X$,
$\wedge^{n}\mathcal{T}_Y|_X\otimes\mathcal{N}_{X/Y}^{\vee}\rightarrow \wedge^{n-1}\mathcal{T}_{X}\rightarrow 0$.
We confirm that $\wedge^{n}\mathcal{T}_Y(-\xi-H)|_X\cong\wedge^{n}\mathcal{T}_Y|_X\otimes\mathcal{N}_{X/Y}^{\vee}$ is nef.
From the exact sequence
\[0\rightarrow \mathcal{T}_{\pi}\rightarrow \mathcal{T}_Y\rightarrow \pi^*\mathcal{T}_{\mathbb{P}^{n-1}}\rightarrow 0, \]
we have the following exact sequence  
\[0\rightarrow\wedge^2\mathcal{T}_{\pi}(-\xi-H)\otimes\pi^*(\wedge^{n-2}\mathcal{T}_{\mathbb{P}^{n-1}})\rightarrow \wedge^n\mathcal{T}_Y(-\xi-H)\rightarrow \mathcal{T}_{\pi}(-\xi-H)\otimes\pi^*(\wedge^{n-1}\mathcal{T}_{\mathbb{P}^{n-1}})\rightarrow 0. \]
 The subbundle $\wedge^2\mathcal{T}_{\pi}(-\xi-H)\otimes\pi^*(\wedge^{n-2}\mathcal{T}_{\mathbb{P}^{n-1}})\cong\mathcal{O}_Y(2\xi)\otimes\pi^*(\wedge^{n-2}\mathcal{T}_{\mathbb{P}^{n-1}}(-2))$ is nef.
To show the nefness of the quotient bundle, we use the exact sequence,
\[0\rightarrow \mathcal{O}_{Y}\rightarrow \mathcal{O}_Y(\xi)\otimes\pi^*(\mathcal{O}^2_{\mathbb{P}^{n-1}}\oplus\mathcal{O}_{\mathbb{P}^{n-1}}(-1))\rightarrow \mathcal{T}_{\pi}\rightarrow 0. \]
We can check that the quotient bundle is also nef by the surjection from the nef vector bundle, 
\[\pi^*(\wedge^{n-1}\mathcal{T}_{\mathbb{P}^{n-1}}(-1))^{\oplus 2}\oplus\pi^*(\wedge^{n-1}\mathcal{T}_{\mathbb{P}^{n-1}}(-2))\rightarrow \mathcal{T}_{\pi}(-\xi-H)\otimes\pi^*(\wedge^{n-1}\mathcal{T}_{\mathbb{P}^{n-1}})\rightarrow 0.\] 
Therefore, $\wedge^{n}\mathcal{T}_Y(-\xi-H)$ is nef.  
Hence, we have that $\wedge^{n-1}\mathcal{T}_X$ is nef. 

Next, we confirm that $\wedge^{i}\mathcal{T}_X$ is not nef for $i\leqslant n-2$.
We suppose that $\wedge^{i}\mathcal{T}_X$ is nef for $i\leqslant n-2$. 
We have $X \sim \xi+H$ in $Y$.
 Sections of $\mathcal{O}_Y(\xi+H)$ correspond to sections of $\mathcal{O}^2_{\mathbb{P}^{n-1}}(1)\oplus\mathcal{O}_{\mathbb{P}^{n-1}}(2)$. 
 Since $c_3(\mathcal{O}^2_{\mathbb{P}^{n-1}}(1)\oplus\mathcal{O}_{\mathbb{P}^{n-1}}(2))=2\not=0$, we know that any section of $\mathcal{O}_Y(\xi+H)$ is zero on some fiber of $\pi$. 
 Hence, the elementary contraction $\pi|_X : X \rightarrow \mathbb{P}^{n-1}$ has a two dimensional fiber and corresponding extremal rational curve $C$ satisfies $-K_X.C=2$.
 This contradicts to Lemma \ref{p-bdle}. 
\end{example}

\begin{lemma}\label{rest}
Let $X$ be an $n$-dimensional smooth projective manifold with nef vector bundle $\displaystyle \wedge ^{i} \mathcal{T}_X$ for $i\leqslant n-1$.
Put $C$ be a rational curve in $X$ and $\nu:\mathbb{P}^1\rightarrow C$ the normalization.
Then $-K_X.C\geqslant 2$.  
We further assume that $i\leqslant n-2$ and $-K_X.C=2$, then $\nu^*\mathcal{T}_X\cong\mathcal{O}_{\mathbb{P}^1}(2)\oplus\mathcal{O}^{n-1}_{\mathbb{P}^1}$.
\end{lemma}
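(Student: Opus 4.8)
The plan is to pull everything back to $\mathbb{P}^1$ along the normalization $\nu$ and reduce both assertions to elementary inequalities among splitting degrees. By Grothendieck's theorem I would write $\nu^*\mathcal{T}_X\cong\bigoplus_{j=1}^{n}\mathcal{O}_{\mathbb{P}^1}(a_j)$ with $a_1\geqslant a_2\geqslant\cdots\geqslant a_n$. Since $\nu$ is the normalization of the integral curve $C$, it is birational onto $C$, so $-K_X.C=\deg\nu^*(-K_X)=\deg\det\nu^*\mathcal{T}_X=\sum_{j=1}^{n}a_j$. Thus the first claim becomes $\sum_j a_j\geqslant 2$ and the second becomes a statement identifying the $a_j$ precisely.

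First I would record the two inputs feeding the inequalities. The differential $d\nu:\mathcal{T}_{\mathbb{P}^1}\cong\mathcal{O}_{\mathbb{P}^1}(2)\to\nu^*\mathcal{T}_X$ is nonzero because $\nu$ is non-constant, and a nonzero map of a line bundle into $\bigoplus_j\mathcal{O}_{\mathbb{P}^1}(a_j)$ must have a nonzero component into some summand of degree $\geqslant 2$; hence $a_1\geqslant 2$. Secondly, nefness of $\wedge^i\mathcal{T}_X$ pulls back to nefness of $\wedge^i\nu^*\mathcal{T}_X=\bigoplus_{j_1<\cdots<j_i}\mathcal{O}_{\mathbb{P}^1}(a_{j_1}+\cdots+a_{j_i})$, and on $\mathbb{P}^1$ a direct sum of line bundles is nef if and only if every summand has nonnegative degree. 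So every $i$-fold sum of the $a_j$ is $\geqslant 0$; in particular, taking the $i$ smallest indices, $a_{n-i+1}+\cdots+a_n\geqslant 0$.

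The key step is to turn this into the bound. Because $a_{n-i}\geqslant a_{n-i+1}\geqslant\cdots\geqslant a_n$, if $a_{n-i}$ were negative then all of the $i$ smallest degrees would be negative, contradicting the displayed inequality; hence $a_{n-i}\geqslant 0$, and therefore $a_2\geqslant\cdots\geqslant a_{n-i}\geqslant 0$ (this is where $i\leqslant n-1$ is used). Writing $\sum_{j=2}^{n}a_j=(a_2+\cdots+a_{n-i})+(a_{n-i+1}+\cdots+a_n)\geqslant 0$ and adding $a_1\geqslant 2$ gives $-K_X.C=\sum_j a_j\geqslant 2$, which proves the first assertion.

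For the second assertion I would run an equality squeeze, and this is the delicate point where the hypothesis $i\leqslant n-2$ is essential (it guarantees the block $a_2,\dots,a_{n-i}$ is nonempty, so the middle degrees are genuinely controlled). Assuming $-K_X.C=\sum_j a_j=2$, the bounds $a_1\geqslant 2$ and $\sum_{j\geqslant 2}a_j\geqslant 0$ force $a_1=2$ and $\sum_{j\geqslant 2}a_j=0$; then the two nonnegative blocks $a_2+\cdots+a_{n-i}\geqslant 0$ and $a_{n-i+1}+\cdots+a_n\geqslant 0$ sum to $0$, so each vanishes. The first block, being a sum of nonnegative terms equal to $0$, forces $a_2=\cdots=a_{n-i}=0$; since $a_{n-i+1}\leqslant a_{n-i}=0$, all terms of the second block are $\leqslant 0$ yet sum to $0$, hence also vanish. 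Therefore $a_1=2$ and $a_2=\cdots=a_n=0$, i.e. $\nu^*\mathcal{T}_X\cong\mathcal{O}_{\mathbb{P}^1}(2)\oplus\mathcal{O}_{\mathbb{P}^1}^{\,n-1}$. I expect the only real subtleties to be the correct translation of nefness into the degree condition on $\mathbb{P}^1$ and the bookkeeping of this final squeeze.
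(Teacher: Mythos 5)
Your proof is correct and takes essentially the same approach as the paper: Grothendieck splitting of $\nu^*\mathcal{T}_X$, the input $a_1\geqslant 2$ from the differential, and the nefness constraint $a_{n-i+1}+\cdots+a_n\geqslant 0$ on the $i$ smallest degrees. The only (immaterial) difference is in the equality case, where the paper argues by contradiction (if $a_n<0$ then $-K_X.C\geqslant n-i+1\geqslant 3$) while you run a direct squeeze on the two blocks of degrees; both rest on exactly the same facts.
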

\begin{proof}
Set $\nu^*\mathcal{T}_X\cong\mathcal{O}_{\mathbb{P}^1}(a_1)\oplus\mathcal{O}_{\mathbb{P}^1}(a_2)\oplus\cdots\oplus\mathcal{O}_{\mathbb{P}^1}(a_n)$ where $a_1\geqslant a_2\geqslant\cdots\geqslant a_n$.
We know that $a_1\geqslant 2$ and $a_{n-i+1}+\cdots+a_n\geqslant 0$.
Therefore we have $-K_X.C\geqslant 2$.  
We assume that $i\leqslant n-2$ and $-K_X.C=2$.
If $a_n<0$, then we get $-K_X.C=a_1+a_2+\cdots+a_n\geqslant n-i+1\geqslant 3$.
Therefore, we have the assertion.
\end{proof}

\begin{lemma}\label{p-bdle}
Let $X$ be an $n$-dimensional smooth projective manifold with nef vector bundle $\wedge^{i}\mathcal{T}_X$ for $i\leqslant n-2$.
Let $C$ be an rational curve in $X$ such that $-K_X.C=2$.
Then $C$ is in some extremal ray and corresponding elementary contraction $\varphi:X\rightarrow Y$ is a $\mathbb{P}^1$-bundle over a smooth projective manifold $Y$.  
\end{lemma}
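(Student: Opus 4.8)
The plan is to realise $C$ as a fibre of a $\mathbb{P}^1$-bundle by studying the deformations of $C$ and then invoking the structure theory of fibre-type extremal contractions of relative dimension one. Write $\nu:\mathbb{P}^1\to C$ for the normalization. Since $i\leqslant n-2$ and $-K_X.C=2$, Lemma \ref{rest} gives $\nu^*\mathcal{T}_X\cong\mathcal{O}_{\mathbb{P}^1}(2)\oplus\mathcal{O}_{\mathbb{P}^1}^{\,n-1}$. In particular $H^1(\mathbb{P}^1,\nu^*\mathcal{T}_X)=0$, so $\mathrm{Mor}(\mathbb{P}^1,X)$ is smooth at $[\nu]$ of dimension $-K_X.C+n=n+2$, the curve $C$ is free, and the family $V$ of rational curves through which $C$ deforms is unobstructed, dominates $X$, and has dimension $n-1$. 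Since the universal family over $V$ then has dimension $n$ and maps onto $X$, only finitely many members of $V$ pass through a general point of $X$.

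First I would show that $V$ is unsplit. If a member degenerated to a reducible or non-reduced $1$-cycle $\sum m_j C_j$, then each $C_j$ is a rational curve, so Lemma \ref{rest} gives $-K_X.C_j\geqslant 2$; as $-K_X.(-)$ is constant in the family, $2=-K_X.C=\sum m_j(-K_X.C_j)\geqslant 2\sum m_j$, forcing a single reduced component. Hence $V$ is unsplit. (Note also that $-K_X$ is nef, since $\det\wedge^{i}\mathcal{T}_X=(-K_X)^{\binom{n-1}{i-1}}$ is nef, so $[C]$ is a $K_X$-negative class.)

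Next I would use the cone theory of unsplit dominating families to conclude that $[C]$ spans a $K_X$-negative extremal ray $R$ of $\overline{NE}(X)$, of length $\ell(R)=2$ (it contains $C$ with $-K_X.C=2$, while every rational curve has $-K_X$-degree $\geqslant 2$ by Lemma \ref{rest}). Let $\varphi:X\to Y$ be its contraction, which exists by the Contraction Theorem; because $V$ dominates $X$ it is of fibre type, hence the elementary contraction attached to $C$. To identify it I must control fibre dimensions: the Ionescu--Wi\'sniewski inequality gives $\dim F\geqslant\ell(R)-1=1$ for every fibre $F$, while the finiteness of members of $V$ through a general point forces the general fibre to be one-dimensional. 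Ruling out higher-dimensional special fibres (via bend-and-break together with the finiteness of $V_x$) is where the argument is most delicate. Granting equidimensionality, the structure theorem of Ando and Fujita for elementary fibre-type contractions of relative dimension one from a smooth variety shows that $Y$ is smooth and $\varphi$ is a conic bundle.

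Finally I would upgrade the conic bundle to a $\mathbb{P}^1$-bundle using Lemma \ref{rest} once more. A degenerate conic fibre contains a reduced line, i.e. a rational curve $\ell$ with $-K_X.\ell=1$, contradicting the bound $-K_X.\ell\geqslant 2$ of Lemma \ref{rest}. Hence $\varphi$ has no degenerate fibres, every fibre is a smooth conic $\cong\mathbb{P}^1$, and $\varphi:X\to Y$ is a $\mathbb{P}^1$-bundle over the smooth manifold $Y$. The main obstacle is the middle step: establishing extremality of $[C]$ and pinning down that \emph{all} fibres of $\varphi$ are one-dimensional, so that the conic-bundle structure theorem applies; once that is secured, Lemma \ref{rest} removes the degenerate fibres cleanly and the conclusion follows.
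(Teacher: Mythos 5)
Your proposal takes a contraction-theoretic route (extremality $\Rightarrow$ contraction $\Rightarrow$ Ando's structure theorem $\Rightarrow$ eliminate degenerate conics), whereas the paper follows Mori's proof of Theorem 8 of \cite{mori}; unfortunately the two steps you yourself flag as ``delicate'' are exactly the content of the lemma, and in your write-up they remain genuine gaps rather than proved steps. First, the claim that ``the cone theory of unsplit dominating families'' shows $[C]$ spans an extremal ray of $\overline{NE}(X)$ is not a citable off-the-shelf theorem: unsplitness plus dominance gives a movable class and (via Koll\'ar's rc-quotient machinery) a proper fibration on some open subset, but neither of these yields extremality of the class in $\overline{NE}(X)$ without further argument, and you give none. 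Second, even granting the extremal contraction $\varphi$, you write ``Granting equidimensionality, the structure theorem of Ando \dots applies''; but bounding \emph{all} fibers of $\varphi$ by dimension one is precisely what must be established before Ando's theorem (or Lemma 2.1 of \cite{fujita}, which the paper invokes in exactly this way in the proof of Main Theorem 1) can be applied, and the finiteness of $V_x$ at a \emph{general} point says nothing about special fibers. So what you have is a correct setup (the splitting type $\nu^*\mathcal{T}_X\cong\mathcal{O}_{\mathbb{P}^1}(2)\oplus\mathcal{O}_{\mathbb{P}^1}^{n-1}$ from Lemma \ref{rest}, freeness, the dimension count, unsplitness --- all of which agree with the paper) followed by two unproved assertions that carry the whole weight of the conclusion.

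The paper avoids both issues by never passing through cone theory: following Mori, it takes the irreducible component $H$ of $Hom(\mathbb{P}^1,X)$ containing $\nu$, observes that \emph{every} $h\in H$ has splitting type $\mathcal{O}_{\mathbb{P}^1}(2)\oplus\mathcal{O}_{\mathbb{P}^1}^{n-1}$ (again by Lemma \ref{rest}, since anticanonical degree is constant on $H$ and no curve of degree $1$ exists), so $H$ is smooth of dimension $n+2$, and then forms the geometric quotients $Y=H/G$ and $Z=(H\times\mathbb{P}^1)/G$ with $G=Aut(\mathbb{P}^1)$. This produces a $\mathbb{P}^1$-bundle $q:Z\rightarrow Y$ (in the \'etale topology) together with an evaluation morphism $p:Z\rightarrow X$ which is smooth, proper and of relative dimension zero, hence finite \'etale; since $X$ is simply connected (being Fano in the situations where the lemma is used), $p$ is an isomorphism. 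Thus $X$ itself inherits the $\mathbb{P}^1$-bundle structure with $C$ as a fiber, and the extremality of $[C]$ and the one-dimensionality of all fibers --- your two missing steps --- fall out at the end instead of being required at the start. To salvage your route you would essentially have to carry out this construction anyway, which is why Mori's argument is the one the paper uses.
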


\begin{proof}
The proof is based on the argument in the proof of Theorem 8 in \cite{mori}. 
Let $\nu:\mathbb{P}^1\rightarrow C\subset X$ be the normalization.
Let $H$ be an irreducible component of the Hom scheme $Hom(\mathbb{P}^1,X)$ containing the morphism $\nu$.
In this case $h^*\mathcal{T}_X\cong\mathcal{O}_{\mathbb{P}^1}(2)\oplus\mathcal{O}^{n-1}_{\mathbb{P}^1}$ for 
all $h\in H$ by Lemma \ref{rest}. 
Then we see that $\dim H=n+2$ and $H$ is smooth by Proposition 3 in \cite{mori}.
Let $G$ be the Aut($\mathbb{P}^1$).
We can construct an $(n-1)$-dimensional smooth projective manifold $Y($resp. an $n$-dimensional smooth projective manifold $Z )$
which is the geometric quotient of $H($resp. $H\times\mathbb{P}^1)$ by $G$ from the argument in the proof of Theorem 8 in \cite{mori}.
Moreover, we have a morphisms $q:Z\rightarrow Y$ which is $\mathbb{P}^1$-bundle in the \'etale topology 
and the evaluation morphism $p:Z\rightarrow X$.
We can show that $p$ is a smooth surjective morphism from $n$-dimensional smooth manifold to $n$-dimensional Fano manifold.
 Since any Fano manifold is simply connected, $p$ is isomorphism and we finish the proof.  
\end{proof}

\begin{example}\label{third6}
Let $X=Bl_{C}(Q^n)$ be the blowing-up of $Q^n\subseteq\mathbb{P}^{n+1}$ along a conic $C$ not on a plain contained in $Q^n$ where $n\geqslant 4$.
At first, we show that $\wedge^{n-1}\mathcal{T}_X$ is nef.
Take a plain $S\cong\mathbb{P}^2$ which contains $C$. 
Let $Y=Bl_{S}(\mathbb{P}^{n+1})$ be the blowing-up of $\mathbb{P}^{n+1}$ containing $Q^n$ along a plain $S$.
Then, $X$ is a closed subvariety of $Y$ of codimension one.
$Y$ has a $\mathbb{P}^3$-bundle structure $Y\cong\mathbb{P}_{\mathbb{P}^{n-2}}(\mathcal{O}^3_{\mathbb{P}^{n-2}}\oplus\mathcal{O}_{\mathbb{P}^{n-2}}(1))$. 
Put $\xi$ the tautological divisor and $H$ a divisor associated with the line bundle $\pi^*\mathcal{O}_{\mathbb{P}^{n-2}}(1)$ where 
$\pi : \mathbb{P}_{\mathbb{P}^{n-2}}(\mathcal{O}^3_{\mathbb{P}^{n-2}}\oplus\mathcal{O}_{\mathbb{P}^{n-2}}(1))\rightarrow \mathbb{P}^{n-2}$ is the natural projection.
We note that fibers of $\pi$ correspond to 3-planes in $\mathbb{P}^{n+1}$ containing $S$.
From this fact, we can see that $\pi^{-1}(z)\cap X$ is an $($possibly singular$)$ irreducible and reduced quadric surface for any $z\in\mathbb{P}^{n-2}$. 

We show that $X\sim 2\xi$ in $Y$.
We set that $X\sim a\xi+bH$ for some integers $a$ and $b$.
Let $C'$ be the strict transform of a line in $P^{n+1}$ not contained in $Q^n$ such that it does not intersect with $C$.
Then, we have that $X.C'=2$, $\xi.C=1$ and $H.C=1$.
Let $C''$ be the strict transform of a line in $P^{n+1}$ not contained in $Q^n$ such that it meets $C$ at one point.
Then, we have that $X.C''=2$, $\xi.C''=1$ and $H.C''=0$.
Hence, we obtain that $a=2$ and $b=0$. 
From this, we know that 
\[\mathcal{N}_{X/Y}= 2\xi|_X.\] 

By the exact sequence $0\rightarrow \mathcal{T}_X\rightarrow \mathcal{T}_Y|_X\rightarrow \mathcal{N}_{X/Y}\rightarrow 0$, we get the surjection of vector bundles on $X$,
$\wedge^{n}\mathcal{T}_Y|_X\otimes\mathcal{N}_{X/Y}^{\vee}\rightarrow \wedge^{n-1}\mathcal{T}_{X}\rightarrow 0$.
We confirm that $\wedge^{n}\mathcal{T}_Y(-2\xi)|_X\cong\wedge^{n}\mathcal{T}_Y|_X\otimes\mathcal{N}_{X/Y}^{\vee}$ is nef.
From the exact sequence
\[0\rightarrow \mathcal{T}_{\pi}\rightarrow \mathcal{T}_Y\rightarrow \pi^*\mathcal{T}_{\mathbb{P}^{n-2}}\rightarrow 0, \]
we have the following exact sequence  
\[0\rightarrow(\wedge^3\mathcal{T}_{\pi})(-2\xi)\otimes\pi^*(\wedge^{n-3}\mathcal{T}_{\mathbb{P}^{n-2}})\rightarrow \wedge^n\mathcal{T}_Y(-2\xi)\rightarrow (\wedge^2\mathcal{T}_{\pi})(-2\xi)\otimes\pi^*(\wedge^{n-2}\mathcal{T}_{\mathbb{P}^{n-2}})\rightarrow 0. \]
 The subbundle $\wedge^3\mathcal{T}_{\pi}(-2\xi)\otimes\pi^*(\wedge^{n-3}\mathcal{T}_{\mathbb{P}^{n-2}})\cong\mathcal{O}_Y(2\xi)\otimes\pi^*(\wedge^{n-3}\mathcal{T}_{\mathbb{P}^{n-1}}(-1))$ is nef.
To show the nefness of the quotient bundle, we use the exact sequence,
\[0\rightarrow \mathcal{O}_{Y}\rightarrow \mathcal{O}_Y(\xi)\otimes\pi^*(\mathcal{O}^3_{\mathbb{P}^{n-2}}\oplus\mathcal{O}_{\mathbb{P}^{n-2}}(-1))\rightarrow \mathcal{T}_{\pi}\rightarrow 0. \]
We can check that the quotient bundle is also nef by the surjection from the nef vector bundle, 
\[\pi^*(\wedge^{n-2}\mathcal{T}_{\mathbb{P}^{n-2}})^{\oplus 3}\oplus\pi^*(\wedge^{n-2}\mathcal{T}_{\mathbb{P}^{n-2}}(-1))^{\oplus 3}\rightarrow (\wedge^2\mathcal{T}_{\pi})(-2\xi)\otimes\pi^*(\wedge^{n-2}\mathcal{T}_{\mathbb{P}^{n-2}})\rightarrow 0.\] 
Therefore, $\wedge^{n}\mathcal{T}_Y(-2\xi)$ is nef.  
Hence, we have that $\wedge^{n-1}\mathcal{T}_X$ is nef. 

Next, we confirm that $\wedge^{i}\mathcal{T}_X$ is not nef for $i\leqslant n-2$.
We suppose that $\wedge^{i}\mathcal{T}_X$ is nef for $i\leqslant n-2$. 
We have the elementary contraction $\pi|_X : X \rightarrow \mathbb{P}^{n-2}$ such that it has a two dimensional fiber and corresponding extremal rational curve $C'$ satisfies $-K_X.C'=2$.
 This contradicts to Lemma \ref{p-bdle}. 
\end{example}

\section{Proof of Main Theorem 1}

In this section, we give a proof of Main Theorem 1. 
\begin{theorem}
Let $X$ be an $n$-dimensional Fano manifold with at least one elementary contraction of birational type such that the second exterior power of tangent bundle $\wedge^2\mathcal{T}_X$ is nef where $n\geqslant 3$.
Then $X$ is isomorphic to the blowing-up of the $n$-dimensional projective space $\mathbb{P}^n$ at a point. 
\end{theorem}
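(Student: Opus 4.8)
The plan is to reduce everything to an analysis of a minimal rational curve of the given ray, combining the two lemmas with the fiber--dimension inequality for extremal contractions. Write $\varphi\colon X\to Y$ for the birational elementary contraction, $R$ for the contracted ray, $E=\operatorname{Exc}(\varphi)=\operatorname{Locus}(R)$, and let $C$ be a minimal rational curve of $R$ through a general point of $E$, with normalization $\nu\colon\mathbb{P}^1\to C$ and splitting type $\nu^*\mathcal{T}_X\cong\bigoplus_{i=1}^n\mathcal{O}_{\mathbb{P}^1}(a_i)$, $a_1\geqslant\cdots\geqslant a_n$. The case $n=3$ is exactly the classification of Campana--Peternell recalled in the introduction (there $i=2=n-1$, so Lemma~\ref{p-bdle} is unavailable), so I would assume $n\geqslant4$ from the outset.

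First I would pin down the splitting type. One always has $a_1\geqslant2$, and $\wedge^2\mathcal{T}_X$ nef forces $a_{n-1}+a_n\geqslant0$ by Lemma~\ref{rest}, so at most one $a_i$ is negative; since $C$ lies in the exceptional locus of the birational morphism $\varphi$ it is not free, so exactly one is, say $a_n=-k$ with $k\geqslant1$, and then $a_{n-1}\geqslant k$, whence $a_1\geqslant\cdots\geqslant a_{n-1}\geqslant k$. Now Lemma~\ref{p-bdle} already rules out $-K_X\cdot C=2$ (that would make $\varphi$ a $\mathbb{P}^1$-bundle, hence of fiber type), while the fiber--dimension inequality of Wi\'sniewski, $\dim E+\dim F\geqslant n+\ell-1$ for a fiber $F$ of $\varphi$, gives $\ell:=-K_X\cdot C\leqslant n-1$. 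Combining the two: if $k\geqslant2$ then $a_i\geqslant k$ for $i\leqslant n-1$ forces $\ell\geqslant(n-1)k-k=(n-2)k\geqslant2(n-2)$, which exceeds $n-1$ for $n\geqslant4$, a contradiction; hence $k=1$, and then $\ell=\sum a_i\geqslant 2+(n-2)-1=n-1$ forces equality $\ell=n-1$ together with $\nu^*\mathcal{T}_X\cong\mathcal{O}(2)\oplus\mathcal{O}(1)^{\oplus(n-2)}\oplus\mathcal{O}(-1)$.

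With $\ell=n-1$ the Wi\'sniewski inequality becomes $\dim E+\dim F\geqslant2n-2$, and since $\dim F\leqslant\dim E\leqslant n-1$ this forces $\dim E=\dim F=n-1$; thus $E$ is a prime divisor contracted by $\varphi$ to a point. Here I would invoke the classification of divisorial contractions of maximal length (Ando, Wi\'sniewski): a divisor contracted to a point whose covering minimal curves satisfy $-K_X\cdot C=n-1$ with normal bundle $\mathcal{O}(1)^{\oplus(n-2)}\oplus\mathcal{O}(-1)$ must have $E\cong\mathbb{P}^{n-1}$ and $\mathcal{N}_{E/X}\cong\mathcal{O}_{\mathbb{P}^{n-1}}(-1)$, so that $\varphi$ is the blow-up of a smooth point $y$ on a smooth variety $Y$, i.e.\ $X\cong Bl_y(Y)$. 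To identify $Y$, observe that for any rational curve $\Gamma\subset Y$ through $y$ of multiplicity $m\geqslant1$ its strict transform satisfies $-K_X\cdot\widetilde{\Gamma}=-K_Y\cdot\Gamma-(n-1)m$, which is $\geqslant2$ by Lemma~\ref{rest}; hence every rational curve through $y$ has $-K_Y\cdot\Gamma\geqslant n+1$. Since $-K_Y$ is then positive on every curve, $Y$ is Fano, and the characterization of projective space of Cho--Miyaoka--Shepherd-Barron gives $Y\cong\mathbb{P}^n$, so $X\cong Bl_{pt}(\mathbb{P}^n)$.

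I expect the genuinely substantive step to be the structural identification $E\cong\mathbb{P}^{n-1}$ with $\mathcal{N}_{E/X}\cong\mathcal{O}(-1)$. The numerics above only produce the correct dimensions and the precise splitting type of $\mathcal{T}_X$ along the minimal curves; upgrading this to the statement that $\varphi$ is an honest smooth blow-up of a point requires the classification of extremal contractions of maximal length rather than a direct computation. Once that is in hand, the transport of the degree bound to $Y$ and the appeal to the $\mathbb{P}^n$-characterization are routine, so this is the one place where I would lean on a nontrivial external structure theorem.
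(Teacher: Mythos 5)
Your first half reproduces the paper's own argument: the splitting-type computation (via Lemma~\ref{rest}-type reasoning and non-freeness of contracted curves) forcing $-K_X\cdot C\geqslant n-1$, and Proposition~\ref{wisniewski} forcing $-K_X\cdot C\leqslant n-1$ and $(\dim E,\dim\tau(E))=(n-1,0)$, is exactly the content of the Claim in the paper's proof combined with Proposition~\ref{wisniewski}. (Two small remarks: your appeal to Lemma~\ref{p-bdle} is redundant, since the splitting type already gives $-K_X\cdot C\geqslant n-1\geqslant 3$ for $n\geqslant 4$; and the paper treats $n=3$ uniformly by this same argument, so outsourcing $n=3$ to Campana--Peternell, while legitimate, is unnecessary.) After that you genuinely diverge. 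The paper never passes to the blown-down variety $Y$: it uses the Cone theorem to find a \emph{second} extremal ray with $E\cdot C>0$, shows its contraction $\phi$ has fibers of dimension $\leqslant 1$, invokes Fujita's results (\cite{fujita}, Lemma 2.1 and Claim 2.3) to see that $\phi$ is a $\mathbb{P}^1$-bundle with $E$ a section --- this is where smoothness of $E$ comes from --- then identifies $E\cong\mathbb{P}^{n-1}$ and $\mathcal{N}_{E/X}\cong\mathcal{O}(-1)$ via Theorem~\ref{ando1} and Kobayashi--Ochiai, and finally recovers $X\cong\mathbb{P}_{\mathbb{P}^{n-1}}(\mathcal{O}\oplus\mathcal{O}(-1))$ by pushing forward $0\to\mathcal{O}_X\to\mathcal{O}_X(E)\to\mathcal{N}_{E/X}\to 0$ along $\phi$. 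Your route instead classifies the birational contraction itself and then must identify $Y$. The structure step you flag as the crux is indeed available, but your attribution ``(Ando, Wi\'sniewski)'' is off: Ando's theorem gives only the divisibility of $-K_X$ and $-E$ on fibers (and by itself does not give smoothness of $E$, which one needs before Kobayashi--Ochiai), and Wi\'sniewski's paper gives only the dimension inequality. The correct reference is Andreatta--Occhetta \cite{andreatta}, Theorem 5.1 (divisorial rays whose length equals the fiber dimension of $E\to\tau(E)$ are smooth blow-ups) --- the very theorem the paper uses for the $(n-1,1)$ case in Main Theorem 2.

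The genuine gap is your final step, the appeal to Cho--Miyaoka--Shepherd-Barron. What you prove is that every rational curve through the \emph{specific} point $y=\tau(E)$ has $-K_Y$-degree $\geqslant n+1$; for curves avoiding $y$ you only know $-K_Y\cdot\Gamma=-K_X\cdot\widetilde\Gamma\geqslant 2$. But the characterization in \cite{cho} is a statement about rational curves through a \emph{general} point (equivalently, it applies when \emph{all} rational curves have degree $\geqslant n+1$, which is how the paper uses it in Main Theorem 2, where every curve in $E$ has degree $\geqslant n=\dim E+1$). The point $y$ is precisely not general --- it is the distinguished center of the blow-up --- and nothing in your argument transports the bound to a general point of $Y$. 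To close this you would need a ``one fixed smooth point'' refinement (results in this direction exist following Kebekus' work, but they must be cited precisely and their hypotheses checked; the phenomena of minimal curves singular at the base point are exactly what generality is used to avoid), or a different end-game: for instance the Bonavero--Campana--Wi\'sniewski classification of manifolds whose one-point blow-up is Fano combined with Lemma~\ref{rest} (which kills $Y=Q^n$, since the strict transform of a line through $y$ would have $-K_X$-degree $1$), or simply the paper's second-ray argument, which bypasses $Y$ altogether. A further minor point: ``$-K_Y$ positive on every curve, hence $Y$ is Fano'' is not a valid implication on its own; you need that $\overline{NE}(Y)=\tau_*\overline{NE}(X)$ is rational polyhedral (true because $X$ is Fano) and then Kleiman's criterion.
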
  

If part is proved in Example \ref{bl_pt}.
Therefore, we prove only if part.

Let $X$ be an $n$-dimensional Fano manifold with an elementary contraction of birational type $\tau : X \rightarrow Y$ such that the second exterior power of tangent bundle $\wedge^2\mathcal{T}_X$ is nef where $n\geqslant 3$.

\begin{claim}\label{rest2}
There is an extremal rational curve contracted by $\tau$ such that $n+1\geqslant -K_X.C\geqslant n-1$.  
\end{claim}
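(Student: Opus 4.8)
The plan is to pick a minimal rational curve $C$ in the extremal ray $R$ associated with the birational contraction $\tau$ and to read the two bounds off the splitting type of $\mathcal{T}_X$ along $C$. Write $\nu:\mathbb{P}^1\to C$ for the normalization and $\nu^*\mathcal{T}_X\cong\bigoplus_{i=1}^n\mathcal{O}_{\mathbb{P}^1}(a_i)$ with $a_1\geqslant a_2\geqslant\cdots\geqslant a_n$. Since $C$ is a minimal curve of an extremal ray, the upper bound $-K_X\cdot C\leqslant n+1$ is exactly Mori's bound on the length of an extremal ray, so the real content is the lower bound $-K_X\cdot C\geqslant n-1$, which is where the nefness of $\wedge^2\mathcal{T}_X$ enters.

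First I would extract the slope constraints. As $\wedge^2\mathcal{T}_X$ is nef, its pullback $\wedge^2(\nu^*\mathcal{T}_X)$ is nef on $\mathbb{P}^1$, so every pairwise sum satisfies $a_i+a_j\geqslant 0$ for $i\neq j$; in particular at most one of the $a_i$ is negative and $a_{n-1}+a_n\geqslant 0$. Because $C$ is a rational curve the differential $\mathcal{O}_{\mathbb{P}^1}(2)=\mathcal{T}_{\mathbb{P}^1}\hookrightarrow\nu^*\mathcal{T}_X$ is nonzero, which forces $a_1\geqslant 2$; this is the first assertion of Lemma \ref{rest}.

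The key step is to show that the one possibly negative slope is genuinely negative, i.e. $a_n<0$. The point is that $C$ cannot be a free curve: if $a_n\geqslant 0$ then $\nu^*\mathcal{T}_X$ is globally generated, so the deformations of $C$ sweep out a dense open subset of $X$ while remaining in the contracted ray $R$. Since $\tau$ is birational it is an isomorphism over a dense open subset of $Y$, so the curves it contracts lie over a proper closed subset of $Y$ and cannot cover a dense open subset of $X$ — a contradiction. Hence $a_n\leqslant -1$, so $a_i\geqslant 0$ for $i\leqslant n-1$, and $a_{n-1}\geqslant -a_n\geqslant 1$, giving $a_i\geqslant -a_n$ for every $i\leqslant n-1$. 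I expect this non-freeness step to be the main obstacle, as everything else is either one of the nef inequalities or bookkeeping; the care lies in justifying that a birational contraction cannot contract a covering family of free rational curves.

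Finally I would close with an elementary estimate. Put $m=-a_n\geqslant 1$, so that $-K_X\cdot C=\sum_{i\leqslant n-1}a_i-m$. When $m=1$ this is at least $a_1+(n-2)\cdot 1-1\geqslant n-1$, using $a_1\geqslant 2$ and $a_i\geqslant 1$ for $2\leqslant i\leqslant n-1$; when $m\geqslant 2$ every $a_i$ with $i\leqslant n-1$ is at least $m$, so $-K_X\cdot C\geqslant (n-1)m-m=(n-2)m\geqslant 2(n-2)\geqslant n-1$ for $n\geqslant 3$. Combining this lower bound with Mori's upper bound on the length of $R$ yields an extremal rational curve $C$ contracted by $\tau$ with $n+1\geqslant -K_X\cdot C\geqslant n-1$, as claimed.
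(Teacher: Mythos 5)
Your proposal is correct and follows essentially the same route as the paper: choose an extremal rational curve contracted by $\tau$ of degree at most $n+1$ (the paper invokes Bend and Break, you invoke Mori's length bound, which amounts to the same thing), then read the lower bound off the splitting type $\nu^*\mathcal{T}_X\cong\bigoplus\mathcal{O}_{\mathbb{P}^1}(a_i)$ using $a_1\geqslant 2$, the pairwise inequalities $a_i+a_j\geqslant 0$ from nefness of $\wedge^2\mathcal{T}_X$, and $a_n<0$ forced by birationality of $\tau$. Your non-freeness argument for $a_n<0$ is exactly the justification for the paper's unproved assertion that $\mathcal{T}_X|_C$ is not nef, so the two proofs coincide in substance.
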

\begin{proof}
By Bend and Break Lemma we have a rational curve $C$ contracted by $\tau$ such that $n+1\geqslant -K_X.C$ and $\mathcal{T}_X|_C$ is not nef since $\tau$ is of birational type. 
Put $\nu:\mathbb{P}^1\rightarrow C$ the normalization.
Set $\nu^*\mathcal{T}_X\cong\mathcal{O}_{\mathbb{P}^1}(a_1)\oplus\mathcal{O}_{\mathbb{P}^1}(a_2)\oplus\cdots\oplus\mathcal{O}_{\mathbb{P}^1}(a_n)$ where $a_1\geqslant a_2\geqslant\cdots\geqslant a_n$.
We know that $a_1\geqslant 2$, $a_{n-1}+a_n\geqslant 0$ and $a_n<0$.
In particular, $a_{n-1}\geqslant 1$.
Therefore we get $-K_X.C=a_1+a_2+\cdots+a_n\geqslant n-1$.
\end{proof} 
Set $E$ an irreducible component of the exceptional locus of $\tau$.
Then we know that $(\dim E, \dim\tau(E))=(n-1,0)$ by the following proposition proved in \cite{wisniewski}.
Im particular, $\tau$ is a divisorial contraction. 
\begin{proposition}\label{wisniewski}
Let X be a smooth projective manifold of dimension $n$ and $R$ an extremal ray on $X$ which corresponds to the contraction $\tau:X\rightarrow Y$.
Set $l(R):=\min \{-K_X.C|[C]\in R\}$.
Let $E$ be any irreducible component of the exceptional locus of $\tau$ and $F$ any irreducible component of any nontrivial fiber of $\tau$.
Then
\[\dim E+\dim F\geqslant n+l(R)-1.\]  
\end{proposition}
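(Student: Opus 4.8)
The plan is to prove this via the deformation theory of rational curves combined with the bend-and-break technique, following the method of Mori used earlier in this paper (cf. \cite{mori}). Throughout set $l:=l(R)$, let $F$ be a component of a nontrivial fiber, and let $E$ be the component of the exceptional locus containing $F$. First I would pick a rational curve $C$ of minimal anticanonical degree in $R$, so that $-K_X\cdot C=l$, passing through a general point $x$ of $F$, and let $V$ be an irreducible component of $\mathrm{RatCurves}(X)$ containing $[C]$. Since $C$ has minimal degree in $R$, the family $V$ is unsplit and every member has class in $R$, hence is contracted by $\tau$. The standard Riemann--Roch/obstruction estimate gives $\dim_{[f]}\mathrm{Hom}(\mathbb{P}^1,X)\geqslant\chi(f^*\mathcal{T}_X)=-K_X\cdot C+n$, and quotienting by $\mathrm{Aut}(\mathbb{P}^1)$ yields $\dim V\geqslant l+n-3$.

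Next I would set up the evaluation map. Let $\pi:\mathcal{U}\rightarrow V$ be the universal family with evaluation $e:\mathcal{U}\rightarrow X$, so that $\dim\mathcal{U}=\dim V+1$ and $\mathrm{Locus}(V)=e(\mathcal{U})\subseteq E$. Writing $V_x\subseteq V$ for the subfamily of curves through $x$, we have $e^{-1}(x)\cong V_x$, and upper semicontinuity of fibre dimension gives $\dim V_x\geqslant\dim\mathcal{U}-\dim\mathrm{Locus}(V)\geqslant\dim V+1-\dim E$, that is $\dim E+\dim V_x\geqslant\dim V+1$.

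I would then bound $\dim F$ from below. Because every member of $V$ is $\tau$-contracted, all curves through $x$ lie in the fibre through $x$, so $\mathrm{Locus}(V_x)\subseteq F$. Here unsplitness is decisive: by bend-and-break, through $x$ and a second general point of $\mathrm{Locus}(V_x)$ only finitely many curves of $V$ can pass (otherwise a minimal curve would degenerate, contradicting minimality of $l$). Hence the evaluation from the universal family over $V_x$ is generically finite, giving $\dim\mathrm{Locus}(V_x)=\dim V_x+1$, and therefore $\dim F\geqslant\dim V_x+1$. Combining the two estimates,
\[\dim E+\dim F\geqslant \dim E+(\dim V_x+1)=(\dim E+\dim V_x)+1\geqslant(\dim V+1)+1\geqslant(l+n-3)+2=n+l-1,\]
which is the assertion.

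The main obstacle I expect is the third step, namely forcing the \emph{equality} $\dim\mathrm{Locus}(V_x)=\dim V_x+1$ rather than the trivial bound $\leqslant$: one must verify that the curves through $x$ genuinely sweep out a locus of one more dimension than the parameter space, and this is exactly where the minimality of $-K_X\cdot C$, through the unsplitness of $V$ and bend-and-break, is indispensable. By contrast, the Riemann--Roch lower bound on $\dim V$, the semicontinuity inequality $\dim E+\dim V_x\geqslant\dim V+1$, and the containment $\mathrm{Locus}(V_x)\subseteq F$ are comparatively routine. A secondary point requiring care is the choice of the component $V$ so that its locus sits inside the intended component $E$ of the exceptional locus; taking $C$ through a general point of $F$ and enlarging $E$ if necessary (which only increases the left-hand side) handles this.
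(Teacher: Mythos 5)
The paper itself gives no proof of this proposition; it is quoted directly from Wi\'sniewski's paper \cite{wisniewski}. So your attempt can only be compared with the standard literature argument (Ionescu, Wi\'sniewski), and your overall scheme is indeed that argument: the Riemann--Roch bound $\dim V\geqslant -K_X\cdot C+n-3$, the fiber-dimension estimate for the evaluation map giving $\dim \mathrm{Locus}(V)+\dim V_x\geqslant \dim V+1$, bend-and-break giving $\dim \mathrm{Locus}(V_x)=\dim V_x+1$, and the final summation are the right ingredients in the right order. The genuine gap is in the step you explicitly classify as routine: the choice, in your first step, of a curve $C$ with $-K_X\cdot C=l(R)$ \emph{through a general point $x$ of the prescribed fiber component $F$}. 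Nothing guarantees such a curve exists. The curves of degree exactly $l(R)$ sweep out a closed subset of the exceptional locus which may miss $F$ entirely. Since the proposition (like Wi\'sniewski's original) is not restricted to birational contractions --- for fiber type one takes $E=X$ --- a concrete counterexample is a conic bundle $\tau\colon X\rightarrow S$ with nonempty discriminant: there $l(R)=1$, the degree-one curves are the lines in the degenerate fibers and sweep out only the preimage of the discriminant, while a general fiber $F$ (a smooth conic) contains no minimal curve at all. Worse, any family whose members do pass through a general point of such an $F$ (the conics) is \emph{not} unsplit, so your derivation of unsplitness from $-K_X\cdot C=l(R)$, on which the bend-and-break step rests, is unavailable exactly where you need it.

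The standard repair, which a correct writeup must contain, is as follows. Take $x\in F$ general; invoke the nontrivial fact (Kawamata's theorem on lengths, or relative Mori theory) that every nontrivial fiber is covered by rational curves whose classes lie in $R$; and let $C$ be a contracted rational curve through $x$ of minimal degree $d_x$ among contracted rational curves through $x$, where possibly $d_x>l(R)$ --- this only improves the Riemann--Roch bound. One can no longer claim that the whole family $V$ is unsplit; instead one shows that the subfamily $V_x$ is proper: any cycle-theoretic limit of curves of $V_x$ has all its components in $R$ by extremality of the ray, each component has positive $-K_X$-degree since $-K_X$ is $\tau$-ample, and any component through $x$ has degree at least $d_x$ by minimality; as the total degree is $d_x$, the limit is irreducible and reduced. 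This properness of $V_x$ is precisely what bend-and-break requires, and with it your remaining steps run verbatim, yielding $\dim E'+\dim F\geqslant n+d_x-1\geqslant n+l(R)-1$. A minor residual point: the component $E'$ of the exceptional locus containing $\mathrm{Locus}(V)$ need not be the given $E$; your ``enlarging $E$'' remark does not resolve this, but the mismatch is harmless for the way the proposition is applied in the paper (where only the bound $\dim E\leqslant n-1$ for every component is combined with the inequality).
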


Let $E$ be the exceptional divisor of $\tau$.
Then, we can find a rational curve $C$ of minimal degree in some extremal ray $R$ such that $E.C>0$ by Cone theorem since $X$ is Fano and $E$ is effective and nontrivial.  
Let $\phi : X\rightarrow Z$ be the elementary contraction associated with $R$.
Then, any fiber of $\phi$ is of dimension $\leqslant 1$. 
Since $-K_X.C'>1$ for any rational curve $C'$ on $X$ we have that $\phi$ is $\mathbb{P}^1$-bundle by Lemma 2.1 of \cite{fujita}.
Moreover we know that $E$ is the section of $\phi$ by Claim 2.3 in \cite{fujita}.
Hence $E\cong Z$ is smooth and there is a rank 2 vector bundle $\mathcal{E}$ on $Z$ such that $X\cong\mathbb{P}_Z(\mathcal{E})$.
Since $n\geqslant-K_E.C=-K_X|_E.C-\mathcal{N}_{E/X}.C\geqslant(n-1)+1=n$, we have that $C$ is a line in $E$ and $-K_E=nL$ for some ample line bundle $L\in Pic(E)$ by the following theorem. 
\begin{theorem}[\cite{ando}, Theorem 2.1]\label{ando1}
Let $X$ be a smooth projective manifold of dimension n. Let $f:X\rightarrow Y$ be an extremal contraction and $E$ the exceptional locus of $f$. Assume that $\mathrm{dim}E=n-1$.
Let F be a general fiber of $f_E:E\rightarrow f(E)$. Then there exists a Cartier divisor $L$ on $X$ such that 
\begin{enumerate}
\item Im$($Pic$X\rightarrow$Pic$F)=\mathbb{Z}[L|_F]$ and $L|_F$ is ample on $F$ $;$
\item $\mathcal{O}_F(-K_X)\cong\mathcal{O}_F(pL)$ and $\mathcal{O}_F(-E)\cong\mathcal{O}_F(qL)$ for some $p,q\in\mathbb{N}.$ 
\end{enumerate}
\end{theorem}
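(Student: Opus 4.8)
The plan is to reduce the statement to two standard facts about the contraction of a single extremal ray: that it has relative Picard number one, and that $-K_X$ and $-E$ are positive along the contracted ray. Write $R$ for the extremal ray contracted by $f$ and $C_0$ for a rational curve generating $R$. Since $F$ is a fiber of $f_E$, it is contracted to a point by $f$, so every irreducible curve $C\subset F$ is contracted and hence has class $[C]\in R$; in particular $[C]$ is a positive multiple of $[C_0]$ in $N_1(X)_{\mathbb{R}}$.

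First I would show that the image of the restriction map $\mathrm{Pic}(X)\to N^1(F)$ has rank one. For $M,M'\in\mathrm{Pic}(X)$ and any curve $C\subset F$ one has $M\cdot C=t_C(M\cdot C_0)$ and $M'\cdot C=t_C(M'\cdot C_0)$ for one and the same positive scalar $t_C$, because $[C]$ and $[C_0]$ are proportional. Hence $(M'\cdot C_0)\,M|_F-(M\cdot C_0)\,M'|_F$ meets every curve of $F$ in degree zero and is therefore numerically trivial on $F$. Taking $M'=-K_X$, which satisfies $-K_X\cdot C_0>0$, this shows that every class $M|_F$ is a rational multiple of $(-K_X)|_F$ in $N^1(F)$, so the image is of rank one.

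Next I would upgrade this from numerical to linear equivalence, which is the heart of the matter. The key geometric input is that the general fiber $F$ is rationally connected: it is covered by deformations of the $K_X$-negative rational curve $C_0$, so it is rationally chain connected, and being smooth and projective it is rationally connected, hence simply connected. Consequently $H^2(F,\mathbb{Z})$ is torsion-free and $\mathrm{Pic}(F)=\mathrm{NS}(F)$ is a free abelian group. A subgroup of a free abelian group that spans a one-dimensional rational subspace is infinite cyclic, so $\mathrm{Im}(\mathrm{Pic}(X)\to\mathrm{Pic}(F))=\mathbb{Z}[\ell]$ for a primitive class $\ell$; choosing any $L\in\mathrm{Pic}(X)$ with $L|_F=\ell$, and replacing $L$ by $-L$ if necessary so that $\ell$ lies in the ample direction, gives assertion (1).

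Finally, both $-K_X$ and $-E$ restrict into this cyclic image, and it remains only to check signs. Since $f$ is the contraction of the $K_X$-negative ray $R$, the divisor $-K_X$ is $f$-ample, so $-K_X|_F$ is ample and $L|_F$, being a positive rational multiple of it, is ample as well; this gives $\mathcal{O}_F(-K_X)\cong\mathcal{O}_F(pL)$ with $p\in\mathbb{N}$. As $E$ is the exceptional divisor we have $E\cdot C_0<0$, so $-E|_F$ has positive degree on the curves covering $F$ and is thus a positive multiple of $L|_F$, yielding $\mathcal{O}_F(-E)\cong\mathcal{O}_F(qL)$ with $q\in\mathbb{N}$. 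The step I expect to be the main obstacle is precisely the passage from numerical to genuine linear equivalence in $\mathrm{Pic}(F)$: controlling torsion and the integral rather than merely $\mathbb{Q}$-structure of the image requires knowing that $F$ has torsion-free Picard group, and it is here that the rational connectedness of the general fiber, rather than the purely numerical proportionality of the first step, is essential.
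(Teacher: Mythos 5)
First, a remark on the comparison: the paper does not prove this statement at all --- it is quoted verbatim from \cite{ando} (Theorem 2.1) as a known result --- so your proposal must be measured against the standard argument behind that citation, which is Kawamata's contraction theorem for extremal rays.

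Your proposal has a genuine gap, and it sits exactly where you located ``the heart of the matter.'' The entire passage from numerical to linear equivalence rests on $F$ being a \emph{smooth} projective variety, and that is not available. $F$ is a general fiber of $f_E:E\rightarrow f(E)$, where $E$ is the exceptional locus of a divisorial contraction of a smooth $X$; such an $E$ need not be smooth, and when $\dim f(E)=0$ the ``general fiber'' is all of $E$. Already for smooth threefolds, Mori's classification of extremal rays contains the divisorial contraction whose exceptional divisor is a singular quadric cone contracted to a point; there $F$ is that cone, and the chain ``rationally chain connected and smooth, hence rationally connected, hence simply connected, hence $H^2(F,\mathbb{Z})$ torsion-free and $\mathrm{Pic}(F)=\mathrm{NS}(F)$ free'' collapses at its first link. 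Generic smoothness cannot rescue this, since it requires smoothness of the total space $E$; when $E$ is singular along a locus dominating $f(E)$, \emph{every} fiber of $f_E$ is singular. So your argument fails in precisely some of the cases the theorem must cover. (Separately, the claim that $F$ is covered by deformations of $C_0$, i.e.\ that fibers of extremal contractions are rationally chain connected, is itself a nontrivial theorem, though one could accept it as citable.)

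The robust route --- essentially Ando's --- avoids the geometry of $F$ entirely. By the contraction theorem, a line bundle $M$ on $X$ with $M\cdot R=0$ is of the form $f^*N$ for some $N\in\mathrm{Pic}(Y)$. Hence $\mathrm{Pic}(X)/f^*\mathrm{Pic}(Y)$ injects into $\mathbb{Z}$ via intersection with $R$, so it is infinite cyclic, generated by the class of some $L\in\mathrm{Pic}(X)$. Since every fiber of $f_E$ over $f(E)$ is a fiber of $f$, the subgroup $f^*\mathrm{Pic}(Y)$ restricts trivially to $F$, and therefore $\mathrm{Im}(\mathrm{Pic}(X)\rightarrow\mathrm{Pic}(F))=\mathbb{Z}[L|_F]$ with no hypotheses whatsoever on the singularities or topology of $F$. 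Your final paragraph then goes through verbatim: $-K_X$ is $f$-ample, so $(-K_X)|_F=pL|_F$ with $p>0$ and $L|_F$ is ample; $E\cdot R<0$ by the negativity lemma, so $(-E)|_F=qL|_F$ with $q>0$. In short, the numerical-to-linear upgrade you tried to extract from the topology of $F$ is exactly what the contraction theorem hands you for free, and it cannot be recovered from the fiber itself because the fiber's structure is not under control.
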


Therefore, we have $E\cong\mathbb{P}^{n-1}$ by Kobayashi-Ochiai's characterization \cite{kobayashi} and $\mathcal{N}_{E/X}\cong\mathcal{O}_{\mathbb{P}^{n-1}}(-1)$.  
Taking the push-forward of the exact sequence
\[0\rightarrow \mathcal{O}_X\rightarrow\mathcal{O}_X(E)\rightarrow\mathcal{N}_{E/X}\rightarrow 0\]
by $\phi$, we have that $X\cong\mathbb{P}_{\mathbb{P}^{n-1}}(\mathcal{O}_{\mathbb{P}^{n-1}}\oplus\mathcal{O}_{\mathbb{P}^{n-1}}(-1))$ which is the blowing up of $\mathbb{P}^n$ at a point.
Hence we complete the proof.

\section{Proof of Main Theorem 2}

In this section, we give a proof of Main Theorem 2. 
\begin{theorem}
Let $X$ be an $n$-dimensional Fano manifold with at least one elementary contraction of birational type such that the third exterior power of tangent bundle $\wedge^3\mathcal{T}_X$ is nef where $n\geqslant 4$.
Assume that $\wedge^2\mathcal{T}_X$ is not nef.
Then X is isomorphic to one of the following,
\begin{enumerate}
\item $\mathbb{P}_{\mathbb{P}^{n-1}}(\mathcal{O}_{\mathbb{P}^{n-1}}\oplus \mathcal{O}_{\mathbb{P}^{n-1}}(-2))$,
\item $\mathbb{P}_{Q^{n-1}}(\mathcal{O}_{Q^{n-1}}\oplus \mathcal{O}_{Q^{n-1}}(-1))$ where $Q^{n-1}$ is a $(n-1)$-dimensional smooth quadric hypersurface in $\mathbb{P}^n$, 
\item blowing-up of $\mathbb{P}^n$ along a line, 
\item product of $\mathbb{P}^1$ and the blowing-up of $\mathbb{P}^{n-1}$ at a point, 
\item blowing-up of $Q^4$ along a line, 
\item blowing-up of $Q^4$ along a conic not on a plain contained in $Q^4$, 
\end{enumerate}
\end{theorem}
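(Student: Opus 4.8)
The plan is to mirror the proof of Main Theorem \ref{second}, refining the numerical input so as to accommodate the weaker hypothesis that $\wedge^3\mathcal{T}_X$ (rather than $\wedge^2\mathcal{T}_X$) is nef. Fix a birational elementary contraction $\tau:X\to Y$. As in the argument preceding Claim 2.2, Bend-and-Break produces a rational curve $C$ contracted by $\tau$ with $-K_X\cdot C\le n+1$ and $\mathcal{T}_X|_C$ not nef (because $\tau$ is birational). Writing $\nu^*\mathcal{T}_X=\bigoplus_i\mathcal{O}_{\mathbb{P}^1}(a_i)$ with $a_1\ge\cdots\ge a_n$, nefness of $\wedge^3\mathcal{T}_X$ gives $a_{n-2}+a_{n-1}+a_n\ge 0$, while $\mathcal{T}_X|_C$ not nef gives $a_n\le -1$; hence $a_{n-2}+a_{n-1}\ge 1$, so $a_{n-2}\ge 1$ and $a_1\ge 2$. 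Summing, $-K_X\cdot C=\sum a_i\ge n-2$, so every such extremal ray $R$ satisfies $n+1\ge l(R)\ge n-2$.

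Next I would feed $l(R)\ge n-2$ into Proposition \ref{wisniewski}. For an irreducible component $E$ of the exceptional locus and $F$ of a nontrivial fibre, $\dim E+\dim F\ge n+l(R)-1\ge 2n-3$. Since $\dim F\le\dim E\le n-1$, this forces $\dim E=n-1$ (so $\tau$ is divisorial) and $\dim F\in\{n-2,\,n-1\}$; equivalently $\tau$ contracts $E$ either to a point (\emph{Case A}, $\dim F=n-1$) or to a curve (\emph{Case B}, $\dim F=n-2$). I treat these separately.

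In \emph{Case A} the general fibre is $E$ itself. Applying Theorem \ref{ando1} gives $\mathcal{O}_E(-K_X)\cong\mathcal{O}_E(pL)$ and $\mathcal{O}_E(-E)\cong\mathcal{O}_E(qL)$ with $p\ge n-2$ and $q\ge 1$, where for a minimal $C$ one has $L\cdot C=1$. Since deformations of $C$ cover $E$, the bound $-K_E\cdot C\le\dim E+1=n$ holds, while adjunction gives $-K_E\cdot C=-K_X\cdot C+q(L\cdot C)\ge(n-2)+1=n-1$; hence the index $p+q$ of $E$ equals $n-1$ or $n$. By the Kobayashi--Ochiai characterization \cite{kobayashi}, $E\cong\mathbb{P}^{n-1}$ (index $n$) or $E\cong Q^{n-1}$ (index $n-1$), with $(p,q)\in\{(n-1,1),(n-2,2),(n-2,1)\}$, i.e. $\mathcal{N}_{E/X}\cong\mathcal{O}(-1)$ or $\mathcal{O}(-2)$. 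Exactly as in the proof of Main Theorem \ref{second}, a complementary extremal ray (one with $E\cdot C'>0$) gives an elementary contraction $\phi:X\to Z$ which, having all fibres of dimension $\le 1$ and $-K_X\cdot C'>1$, is a $\mathbb{P}^1$-bundle with $E$ a section (cf. \cite{fujita}); pushing forward $0\to\mathcal{O}_X\to\mathcal{O}_X(E)\to\mathcal{N}_{E/X}\to 0$ identifies $X$ with $\mathbb{P}_E(\mathcal{O}\oplus\mathcal{N}_{E/X})$. This produces the blow-up of $\mathbb{P}^n$ at a point ($E=\mathbb{P}^{n-1}$, $\mathcal{O}(-1)$) and Examples \ref{third1} and \ref{third2}; the first is excluded by the hypothesis that $\wedge^2\mathcal{T}_X$ is not nef, via Main Theorem \ref{second}.

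In \emph{Case B}, $\tau(E)$ is a curve and $\dim F=n-2$, so the Wi\'sniewski inequality is an equality, $l(R)=n-2$, and a minimal $C$ has $-K_X\cdot C=n-2$. Applying Theorem \ref{ando1} to a general fibre $F$, adjunction gives $-K_F=(p+q)L$ with $p=n-2$; the Fano bound $p+q\le\dim F+1=n-1$ together with $q\ge 1$ forces $q=1$ and index $n-1$, so $F\cong\mathbb{P}^{n-2}$ by \cite{kobayashi} and $\mathcal{O}_X(E)|_F\cong\mathcal{O}(-1)$. By Ando's structure theorem \cite{ando} this means $\tau$ is the blow-up of a smooth manifold $Y$ along a smooth curve $B=\tau(E)$. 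It then remains to determine the pairs $(Y,B)$: here I would run through the remaining extremal rays of $X$, noting that each minimal curve has anticanonical degree $\ge 2$ by Lemma \ref{rest}, and that any ray of minimal degree $2$ yields (via Lemma \ref{p-bdle}, applicable since $3\le n-2$ when $n\ge 5$) a $\mathbb{P}^1$-bundle structure. Testing nefness of $\wedge^3\mathcal{T}_X$ on the fibres of these contractions and on strict transforms of minimal rational curves of $Y$ meeting $B$ should force $Y\in\{\mathbb{P}^n,\ \mathbb{P}^1\times\mathbb{P}^{n-1},\ Q^4\}$ and $B$ to be a line, a fibre $\mathbb{P}^1\times\{pt\}$, or a line/conic respectively, with the quadric cases surviving only for $n=4$ (since there $\wedge^3=\wedge^{n-1}$, matching Examples \ref{third5} and \ref{third6}); this recovers Examples \ref{third3}, \ref{third4}, \ref{third5}, \ref{third6}. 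I expect this $(Y,B)$ classification to be the main obstacle: the reductions above are routine once the inequality $a_{n-2}+a_{n-1}\ge 1$ is in hand, whereas bounding the degree and genus of $B$, excluding every Fano target $Y$ beyond the three listed (in particular conics lying on a plane contained in $Q^4$), and explaining the collapse to $n=4$ for the quadric blow-ups all require a careful, case-by-case organisation of the second Mori contraction together with nefness tests of $\wedge^3\mathcal{T}_X$ on well-chosen test curves.
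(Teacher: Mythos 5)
Your first half tracks the paper's actual proof closely: the Bend-and-Break claim giving $n+1\geqslant -K_X\cdot C\geqslant n-2$, the Wi\'sniewski dichotomy $(\dim E,\dim\tau(E))=(n-1,0)$ or $(n-1,1)$, and in Case A the identification of $E$ and $\mathcal{N}_{E/X}$ followed by the push-forward of $0\to\mathcal{O}_X\to\mathcal{O}_X(E)\to\mathcal{N}_{E/X}\to 0$ are all exactly the paper's steps. One local defect in your Case A: you assert $L\cdot C=1$ for a minimal curve, which Theorem \ref{ando1} does not give you. In the subcase $-K_X\cdot C=n-2$, $E\cdot C=-2$ one could a priori have $q=1$ and $L\cdot C=2$, so that $-K_E=(p+q)L$ with $p+q=n/2$, and Kobayashi--Ochiai no longer applies. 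The paper avoids this exact trap: since $E$ is contracted to a point, every rational curve $C'\subset E$ is numerically proportional to $C$, hence $-K_E\cdot C'\geqslant n=\dim E+1$ for \emph{all} rational curves, and one invokes the Cho--Miyaoka--Shepherd-Barron characterization \cite{cho} of $\mathbb{P}^{n-1}$ instead of Kobayashi--Ochiai. This is repairable, so I would not call it the main problem.

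The genuine gap is Case B, and you flag it yourself. After concluding that $\tau$ is the blow-up of a smooth $Y$ along a smooth curve $B$ (the paper gets this from Theorem 5.1 of Andreatta--Occhetta \cite{andreatta}; your citation of Ando for this structure statement is off, since Ando's structure results concern fibers of dimension $\leqslant 1$, but the conclusion is the same), you propose to classify the pairs $(Y,B)$ by running through extremal rays and testing $\wedge^3\mathcal{T}_X$ on ad hoc curves, and you concede this is ``the main obstacle.'' That is precisely the step your proposal does not contain, and hand-chosen test curves will not by themselves exclude every other Fano target $Y$. The paper closes this in one stroke: Lemma \ref{rest} gives pseudo-index $i(X)\geqslant 2$ (which you do note), and then Tsukioka's classification \cite{tsukioka} of Fano manifolds of pseudo-index $\geqslant 2$ obtained by blowing up a smooth manifold along a smooth curve yields exactly four candidates --- $Bl_{l}(\mathbb{P}^n)$, $\mathbb{P}^1\times Bl_{pt}(\mathbb{P}^{n-1})$, $Bl_{l}(Q^n)$, and $Bl_{C}(Q^n)$ for a conic $C$ not on a plane in $Q^n$. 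The Examples \ref{third3}--\ref{third6} then decide which survive: for the two quadric cases only $\wedge^{n-1}\mathcal{T}_X$ is nef while $\wedge^{i}\mathcal{T}_X$ fails to be nef for $i\leqslant n-2$ (via Lemma \ref{p-bdle}), forcing $n=4$, as you anticipated. So the missing idea is the appeal to Tsukioka's theorem, which converts your pseudo-index observation into the finite list; without it, your Case B is an unexecuted plan rather than a proof.
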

If part is proved in Example \ref{third1}, \ref{third2}, \ref{third3}, \ref{third4}, \ref{third5} and \ref{third6}.
Therefore, we prove only if part.

Let $X$ be an $n$-dimensional Fano manifold with an elementary contraction of birational type $\tau : X \rightarrow Y$ such that the third exterior power of tangent bundle $\wedge^3\mathcal{T}_X$ is nef where $n\geqslant 4$.
Furthermore we suppose that $\wedge^2\mathcal{T}_X$ is not nef.
Put $C$ a rational curve of minimal degree in an extremal ray which corresponds to the given birational contraction $\tau$. 

\begin{claim}\label{rest2}
There is an extremal rational curve contracted by $\tau$ such that $n+1\geqslant -K_X.C\geqslant n-2$.  
\end{claim}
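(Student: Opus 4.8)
The plan is to mimic the proof of the analogous claim in the proof of Main Theorem 1, replacing the numerical input coming from $\wedge^2\mathcal{T}_X$ by the one coming from $\wedge^3\mathcal{T}_X$. First I would apply the Bend and Break Lemma to produce a rational curve $C$ contracted by $\tau$ with $-K_X.C\leqslant n+1$; since $\tau$ is an elementary contraction, $C$ lies in the single extremal ray contracted by $\tau$, so it is automatically extremal, and this already yields the upper bound. Because $\tau$ is of birational type, the restriction $\mathcal{T}_X|_C$ cannot be nef. Writing $\nu:\mathbb{P}^1\rightarrow C$ for the normalization and $\nu^*\mathcal{T}_X\cong\mathcal{O}_{\mathbb{P}^1}(a_1)\oplus\cdots\oplus\mathcal{O}_{\mathbb{P}^1}(a_n)$ with $a_1\geqslant\cdots\geqslant a_n$, this non-nefness translates into $a_n<0$.

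Next I would extract the lower bound $-K_X.C\geqslant n-2$ purely from the splitting type. Since $\nu$ is generically an immersion, the differential embeds $\mathcal{T}_{\mathbb{P}^1}\cong\mathcal{O}_{\mathbb{P}^1}(2)$ into $\nu^*\mathcal{T}_X$, so $a_1\geqslant 2$. As $\wedge^3(\nu^*\mathcal{T}_X)\cong\nu^*(\wedge^3\mathcal{T}_X)$ is a nef direct sum of line bundles on $\mathbb{P}^1$, each of its summands has nonnegative degree; the smallest such degree is $a_{n-2}+a_{n-1}+a_n$, whence $a_{n-2}+a_{n-1}+a_n\geqslant 0$. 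Combined with $a_n<0$ and the ordering $a_{n-2}\geqslant a_{n-1}\geqslant a_n$, this rules out $a_{n-2}\leqslant 0$ (otherwise the three terms would sum to at most $a_n<0$), so $a_{n-2}\geqslant 1$; by monotonicity every $a_i$ with $2\leqslant i\leqslant n-3$ is then $\geqslant 1$ as well. Grouping $-K_X.C=a_1+\sum_{i=2}^{n-3}a_i+(a_{n-2}+a_{n-1}+a_n)$ and bounding the three blocks below by $2$, $n-4$, and $0$ respectively gives $-K_X.C\geqslant n-2$. For $n=4$ the middle sum is empty and the estimate reduces to $2+0+0=n-2$, so the bound still holds.

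The main obstacle is the step that converts the birationality of $\tau$ into the algebraic statement $a_n<0$: one must guarantee that the curve furnished by Bend and Break genuinely lies in a fiber of $\tau$ and that a fiber-type splitting (all $a_i\geqslant 0$) is incompatible with $\tau$ being birational, exactly as in the argument underlying the corresponding claim of Main Theorem 1. Once this is secured, the remainder is elementary bookkeeping with the integers $a_i$. I note in particular that the global hypothesis that $\wedge^2\mathcal{T}_X$ is not nef is not used in this claim; it will enter only later, to separate the geometric cases in the classification.
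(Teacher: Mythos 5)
Your proposal is correct and follows essentially the same route as the paper: Bend and Break gives a contracted (hence extremal, as $\tau$ is elementary) rational curve with $-K_X.C\leqslant n+1$ and $\mathcal{T}_X|_C$ not nef, and then the splitting-type bookkeeping $a_1\geqslant 2$, $a_{n-2}+a_{n-1}+a_n\geqslant 0$, $a_n<0$ forces $a_{n-2}\geqslant 1$ and yields $-K_X.C\geqslant n-2$. The step you flag as the main obstacle (non-nefness of $\mathcal{T}_X|_C$ from birationality of $\tau$) is asserted in the paper at exactly the same level of detail, so your write-up matches the paper's argument.
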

\begin{proof}
By Bend and Break Lemma we have an extremal rational curve $C$ contracted by $\tau$ such that $n+1\geqslant -K_X.C$ and $\mathcal{T}_X|_C$ is not nef since $\tau$ is of birational type. 
Put $\nu:\mathbb{P}^1\rightarrow C$ the normalization.
Set $\nu^*\mathcal{T}_X\cong\mathcal{O}_{\mathbb{P}^1}(a_1)\oplus\mathcal{O}_{\mathbb{P}^1}(a_2)\oplus\cdots\oplus\mathcal{O}_{\mathbb{P}^1}(a_n)$ where $a_1\geqslant a_2\geqslant\cdots\geqslant a_n$.
We know that $a_1\geqslant 2$, $a_{n-2}+a_{n-1}+a_n\geqslant 0$ and $a_n<0$.
In particular, $a_{n-2}\geqslant 1$.
Therefore we get $-K_X.C=a_1+a_2+\cdots+a_n\geqslant n-2$.
\end{proof}
Set $E$ an irreducible component of the exceptional locus of $\tau$.
Then we know that $(\dim E, \dim\tau(E))=(n-1,0)$ or $(n-1,1)$ by Proposition \ref{wisniewski}.
In particular, $\tau$ is a divisorial contraction. 

Let $E$ be the exceptional divisor of $\tau$.
Then, we can find a rational curve $C$ of minimal degree in some extremal ray $R$ such that $E.C>0$ by Cone theorem since $X$ is Fano and $E$ is effective and nontrivial.  
Let $\phi : X\rightarrow Z$ be the elementary contraction associated with $R$.
Then, any fiber of $\phi$ is of dimension $\leqslant 2$. 

We consider the case where $(\dim E, \dim\tau(E))=(n-1,0)$.
Then, any fiber of $\phi$ is of dimension $\leqslant 1$.
Therefore we have $\phi$ is $\mathbb{P}^1$-bundle, $Z$ is a Fano manifold and $E$ is the section of $\phi$ by the same argument in the proof of Theorem $1.1$.
Hence $E\cong Z$ is smooth and there is a rank 2 vector bundle $\mathcal{E}$ on $Z$ such that $X\cong\mathbb{P}_Z(\mathcal{E})$.
Since $n\geqslant-K_E.C=-K_X|_E.C-\mathcal{N}_{E/X}.C\geqslant(n-2)+1=n-1$, we have $(-K_E.C,-K_X|_E.C,-\mathcal{N}_{E/X}.C)=(n,n-1,1)$, $(n-1,n-2,1)$ or $(n, n-2, 2)$.

If $(-K_E.C,-K_X|_E.C,-\mathcal{N}_{E/X}.C)=(n,n-1,1)$, then we have $X\cong Bl_{point}\mathbb{P}^n$ by the proof of Main theorem 1. 
In this case, we know $\wedge^2\mathcal{T}_X$ is nef by Example \ref{bl_pt}.
This is a contradiction. 

If $(-K_E.C,-K_X|_E.C,-\mathcal{N}_{E/X}.C)=(n-1,n-2,1)$, we have that $C$ is a line in $E$ and $-K_E=(n-1)L$ for some ample line bundle $L\in Pic(E)$ by Theorem \ref{ando1}. 
Therefore, we have $E\cong Q^{n-1}$ by Kobayashi-Ochiai's characterization \cite{kobayashi} and $\mathcal{N}_{E/X}\cong\mathcal{O}_{Q^{n-1}}(-1)$.  
Taking the push-forward of the exact sequence
\[0\rightarrow \mathcal{O}_X\rightarrow\mathcal{O}_X(E)\rightarrow\mathcal{N}_{E/X}\rightarrow 0\]
by $\phi$, we have that $X\cong\mathbb{P}_{Q^{n-1}}(\mathcal{O}_{Q^{n-1}}\oplus\mathcal{O}_{Q^{n-1}}(-1))$.

We consider the case where $(-K_E.C,-K_X|_E.C,-\mathcal{N}_{E/X}.C)=(n,n-2,2)$.
We have $-K_E.C'\geqslant n=\dim E+1$ for any rational curve in $E$ because $C'$ is numerically equivalent to $aC$ for some positive integer $a$.
From the characterization of projective spaces due to Cho--Miyaoka--Shepherd-Barron\cite{cho}, we know $E\cong\mathbb{P}^{n-1}$ and $\mathcal{N}_{E/X}\cong\mathcal{O}_{\mathbb{P}^{n-1}}(-2)$.
Taking the push-forward of the exact sequence
\[0\rightarrow \mathcal{O}_X\rightarrow\mathcal{O}_X(E)\rightarrow\mathcal{N}_{E/X}\rightarrow 0\]
by $\phi$, we have that $X\cong\mathbb{P}_{\mathbb{P}^{n-1}}(\mathcal{O}_{\mathbb{P}^{n-1}}\oplus\mathcal{O}_{\mathbb{P}^{n-1}}(-2))$.

We have the assertion if $(\dim E, \dim\tau(E))=(n-1,0)$. 

Secondary, we consider the case where $(\dim E, \dim\tau(E))=(n-1,1)$.  
In this case, $Y$ and $W:=\tau(E)$ are smooth and $\tau$ is the blowing-up of $Y$ along a smooth curve $W$ by Theorem 5.1 in \cite{andreatta} and Claim \ref{rest2}.  
By Lemma \ref{rest}, we know that  the pseudo-index $i(X):=\min\{-K_X.C|$ $C$ is a rational curve in X$\}\geqslant 2$.

From the classification of Tsukioka \cite{tsukioka}, we have that $X$ is isomorphic to one of the following,
\begin{enumerate} 
\item blowing-up of $\mathbb{P}^n$ along a line, 
\item product of $\mathbb{P}^1$ and the blowing-up of $\mathbb{P}^{n-1}$ at a point, 
\item blowing-up of $Q^n$ along a line, 
\item blowing-up of $Q^n$ along a conic not on a plain contained in $Q^n$,
\end{enumerate}
Note that we use the fact that if $X$ is a blowing-up of a smooth $n$-dimensional manifold along a smooth subvariety of codimension 2, then $i(X)=1$.
By Example \ref{third3}, \ref{third4}, \ref{third5} and \ref{third6}, we have the assertion.


\end{document}